\def\input@path{{\string"/Users/russw/Documents/Research/mypapers/Frankl's conjecture for sg lattices/\string"/}}
\numberwithin{equation}{section}
\numberwithin{figure}{section}
\theoremstyle{plain}
\newtheorem{thm}{\protect\theoremname}[section]
  \theoremstyle{plain}
  \newtheorem{conjecture}[thm]{\protect\conjecturename}
  \theoremstyle{plain}
  \newtheorem{cor}[thm]{\protect\corollaryname}
  \theoremstyle{plain}
  \newtheorem{prop}[thm]{\protect\propositionname}
  \theoremstyle{plain}
  \newtheorem{lem}[thm]{\protect\lemmaname}
\newcommand{\setlabel}[1]{\def\@currentlabel{#1}}
  \providecommand{\conjecturename}{Conjecture}
  \providecommand{\corollaryname}{Corollary}
  \providecommand{\lemmaname}{Lemma}
  \providecommand{\propositionname}{Proposition}
\providecommand{\theoremname}{Theorem}
\begin{document}
\global\long\def\normalin{\mathrel{\triangleleft}}

\global\long\def\semidirect{\rtimes}

\global\long\def\comma{{,}}

\global\long\def\bot{\hat{0}}

\global\long\def\top{\hat{1}}

\global\long\def\subsetdot{\mathrel{\subset\!\!\!\!{\cdot}\,}}

\global\long\def\dotsupset{\mathrel{\supset\!\!\!\!\!\cdot\,\,}}

\global\long\def\height{\operatorname{height}}

\title{Frankl's Conjecture for subgroup lattices}

\author{Alireza Abdollahi, Russ Woodroofe, and Gjergji Zaimi}

\thanks{The first author was supported in part by grant No. 94050219 from
School of Mathematics, Institute for Research in Fundamental Sciences
(IPM). The first author was additionally financially supported by
the Center of Excellence for Mathematics at the University of Isfahan.}

\address{Department of Mathematics, University of Isfahan, Isfahan, 81746-73441,
Iran\vspace{-0.2cm}
}

\address{School of Mathematics, Institute for Research in Fundamental Sciences
(IPM), P.O. Box 19395-5746, Tehran, Iran}

\email{a.abdollahi@math.ui.ac.ir}

\urladdr{\url{http://sci.ui.ac.ir/~a.abdollahi}}

\address{Department of Mathematics \& Statistics, Mississippi State University,
MS 39762}

\email{rwoodroofe@math.msstate.edu}

\urladdr{\url{http://rwoodroofe.math.msstate.edu/}\bigskip{}
}

\email{gjergjiz@gmail.com}
\begin{abstract}
We show that the subgroup lattice of any finite group satisfies Frankl's
Union-Closed Conjecture. We show the same for all lattices with a
modular coatom, a family which includes all supersolvable and dually
semimodular lattices. A common technical result used to prove both
may be of some independent interest.
\end{abstract}

\maketitle

\section{\label{sec:Introduction}Introduction}

\subsection{Frankl's Conjecture}

All groups and lattices considered in this paper will be finite. We
will examine the following conjecture, attributed to Frankl from 1979.
\begin{conjecture}[Frankl's Union-Closed Conjecture]
\label{conj:Frankl}  If $L$ is a lattice with at least $2$ elements,
then there is a join-irreducible $a$ with $\left|[a,\top]\right|\leq\frac{1}{2}\left|L\right|$.
\end{conjecture}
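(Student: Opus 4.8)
The plan is to attempt the statement by induction on $\left|L\right|$, pivoting on a coatom, and then to isolate the one step I expect to be decisive. Assume $\left|L\right|\geq 2$ and fix a coatom $m$, i.e.\ a maximal element of $L\setminus\{\top\}$. The principal ideal $[\bot,m]$ is again a lattice (meets are inherited, and the join of two elements below $m$ stays below $m$ since $m$ bounds them both), and it has strictly fewer elements than $L$. If $[\bot,m]$ is a single point then $m=\bot$ and $L=\{\bot,\top\}$, a case dispatched directly since $\top$ is join-irreducible with $\left|[\top,\top]\right|=1\leq\tfrac12\left|L\right|$. Otherwise the inductive hypothesis furnishes a join-irreducible $a$ of $[\bot,m]$ with $\left|[a,m]\right|\leq\tfrac12\left|[\bot,m]\right|$. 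The first thing to verify is that $a$ remains join-irreducible in $L$, and this is automatic: join-irreducibility is the condition of possessing a unique lower cover, and because $[\bot,m]$ is a down-set the elements strictly below $a$ are identical whether computed in $[\bot,m]$ or in $L$. Hence $a$ is a legitimate candidate for the whole lattice.

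The substance of the argument is to convert ``$a$ is abundant inside $[\bot,m]$'' into ``$a$ is abundant inside $L$''. Write $U=\{x\in L:x\not\leq m\}$ for the set of elements not below the chosen coatom, so that $\top\in U$ and $L$ is the disjoint union of $[\bot,m]$ and $U$. Every $x\geq a$ either lies below $m$, contributing to $[a,m]$, or lies in $U$, so I may split
\begin{equation*}
\left|[a,\top]\right|=\left|[a,m]\right|+\left|[a,\top]\cap U\right|\leq\tfrac12\bigl(\left|L\right|-\left|U\right|\bigr)+\left|[a,\top]\cap U\right|.
\end{equation*}
Consequently the target inequality $\left|[a,\top]\right|\leq\tfrac12\left|L\right|$ is exactly equivalent to the clean local assertion that $a$ lies above at most half of the elements of $U$, namely $\left|[a,\top]\cap U\right|\leq\tfrac12\left|U\right|$.

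I expect this final step to be the real obstacle, and in candor I do not expect to be able to close it in the stated generality. The quantity $[a,\top]\cap U$ measures how the join-irreducible handed to us by induction interacts with the part of $L$ lying ``above'' the coatom, and an arbitrary lattice offers no leverage on it: the element $a$ could perfectly well sit beneath a disproportionate share of $U$. The natural repair is to choose the coatom $m$ and the element $a$ \emph{in coordination} --- for instance so that $U$ is order-isomorphic, via meeting with a fixed complement of $m$, to an interval on which the half-counting can be controlled --- but such a complement, and such a transparent $U$, is precisely what a general lattice need not supply. This is the gap that forces structural hypotheses at the top of the lattice (a modular, or merely left-modular, coatom rendering $U$ a faithful copy of a known interval), and it is the reason the unrestricted conjecture has resisted proof. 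I would therefore offer the induction-on-coatoms reduction above as the genuine core of the plan, while regarding the control of $\left|[a,\top]\cap U\right|$ without extra hypotheses as the central, and as far as I know still open, difficulty.
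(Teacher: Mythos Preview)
The statement you were asked to prove is not a theorem in the paper; it is Frankl's Conjecture itself, recorded as Conjecture~\ref{conj:Frankl} and explicitly treated as open. The paper offers no proof of it in general. Its actual contributions are special cases: Theorem~\ref{thm:TechLattices} handles any lattice containing a left-modular element $m\neq\top$ with $m\vee x\vee y=\top$ for some join-irreducibles $x,y$, and in particular any lattice with a left-modular coatom (Theorem~\ref{thm:ComodLats}) and subgroup lattices (Theorem~\ref{thm:MainThm}).

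Your self-assessment is accurate, and honest. The coatom-based inductive reduction you describe is a natural first move, and the inequality $\left|[a,\top]\cap U\right|\leq\tfrac12\left|U\right|$ you isolate is precisely where the argument stalls without further hypotheses; there is no known way to close it in general, and you should not expect to. Your closing remarks in fact anticipate the paper's positive results: when the coatom $m$ is left-modular, the paper does \emph{not} run your induction but instead picks a join-irreducible $x\not\leq m$ and builds an explicit injection from $[x,\top]$ into its complement, using $\alpha\mapsto m\wedge\alpha$ on part of the domain and invoking left-modularity to certify injectivity. So the structural hypothesis you predict is exactly the one the paper exploits, though via a direct injection rather than by rescuing the inductive step.
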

There are a number of different equivalent forms of this conjecture.
The original form that Frankl considered involved a related condition
for families of sets that are closed under intersection. The first
appearance in print was in the conference proceedings \citep{Rival:1985},
arising from its mention by Duffus in a problem session. Three forms
of the problem are given in \citep{Rival:1985}: a statement about
families of sets closed under union, Frankl's original form, and the
lattice statement as we have here. Conjecture~\ref{conj:Frankl}
appears as a 5-difficulty problem in \citep{Stanley:1999}, where
it is called a ``diabolical'' problem. See \citep{Bruhn/Schaudt:2015}
for further information and history. The conjecture is currently the
subject of a Polymath project \citep{Polymath11}.

We will henceforth refer to Conjecture~\ref{conj:Frankl} as Frankl's
Conjecture. We will focus on the lattice form. If we wish to refer
to the join-irreducible $a$ satisfying the required condition, we
will say $L$ satisfies Frankl's Conjecture with $a$.

Frankl's Conjecture, while open in general, is known to hold for many
families of lattices. Poonen in \citep{Poonen:1992} proved and generalized
remarks of Duffus from \citep{Rival:1985}: that the conjecture holds
for distributive lattices, and for relatively complemented (including
geometric) lattices. Reinhold \citep{Reinhold:2000} showed the conjecture
to hold for dually semimodular lattices (see also \citep{Abe/Nakano:2000}).
Whether the conjecture holds for semimodular lattices is in general
unknown, but Czédli and Schmidt in \citep{Czedli/Schmidt:2008} verified
it for semimodular lattices that have a high ratio of elements to
join-irreducibles. 

We remark that Blinovsky has an arXiv preprint which claims to settle
the Frankl Conjecture. However: his argument is difficult to follow,
and has gone through a large number of arXiv versions in a short time.
Moreover, he has also claimed to solve several other difficult conjectures
in a short period, using the same technique. There does not seem at
this time to be a consensus that his proof is correct.

\subsection{Subgroup lattices}

Recall that for a group $G$, the \emph{subgroup lattice} of $G$
is the set $L(G)$ of all subgroups of $G$, ordered by inclusion. 

Our first main theorem verifies that Frankl's Conjecture holds for
subgroup lattices. 
\begin{thm}
\label{thm:MainThm}If $G$ is a group and $L(G)$ is the subgroup
lattice of $G$, then $L(G)$ satisfies Conjecture~\ref{conj:Frankl}.
\end{thm}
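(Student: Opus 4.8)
The plan is to argue by induction on $|G|$. First recall that a subgroup $A\le G$ is join-irreducible in $L(G)$ precisely when $A\ne 1$ and $A$ has a unique maximal subgroup, equivalently when $A$ is cyclic of prime-power order. So the task is to exhibit a nontrivial cyclic subgroup $A$ of prime-power order with $|[A,G]|\le\frac12|L(G)|$, equivalently to exhibit such an $A$ together with an injection from the subgroups of $G$ containing $A$ into the subgroups of $G$ not containing $A$. When $|G|$ is prime the choice $A=G$ works, which settles the base case.

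Second, I would dispose of the groups $G$ that have a \emph{modular} maximal subgroup $M$: for these $L(G)$ has a modular coatom, so Frankl's Conjecture follows from our theorem on lattices with a modular coatom. In particular, if $G$ is not perfect then $G/G'$ is a nontrivial finite abelian group and so has a subgroup of prime index, whose preimage $M$ is a normal maximal subgroup of $G$; since $M$ is permutable, Dedekind's modular law shows $M$ is a modular element of $L(G)$. Hence we may assume from now on that $G$ has no modular maximal subgroup, so in particular $G$ is perfect.

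The inductive engine is a counting lemma — a special case of the technical result advertised in the abstract, and plausibly of independent interest: if $M$ is a maximal subgroup of $G$ and $A\le M$ is a nontrivial cyclic prime-power subgroup such that (i) $|[A,M]|\le\frac12|L(M)|$ and (ii) at most half of the subgroups of $G$ not contained in $M$ contain $A$, then $L(G)$ satisfies Frankl's Conjecture with $A$. This is proved by partitioning both $L(G)$ and $[A,G]$ according to whether a subgroup is contained in $M$: condition (i) bounds the part inside $M$, condition (ii) bounds the part outside, and adding the two inequalities gives $|L(G)|\ge 2\,|[A,G]|$; note that $A$, being cyclic of prime-power order, is automatically join-irreducible in $L(G)$ as well. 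Since $M$ is a proper subgroup, the inductive hypothesis provides a join-irreducible with which $L(M)$ satisfies the conjecture, which makes (i) free, so all the content lies in (ii). One clean situation is worth isolating: if $A$ has prime order and has a complement $K$ in $G$ with $A$ or $K$ normal, then every subgroup $H\supseteq A$ factors as $H=A(H\cap K)$, so $H\mapsto H\cap K$ already injects $[A,G]$ into the subgroups not containing $A$, and Frankl's Conjecture holds with $A$ with no further work.

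I expect the main obstacle to be verifying (ii), and choosing $M$ and $A$ so that it can be verified, for perfect groups with no modular maximal subgroup — a class that contains every non-abelian simple group. For such $G$ the complement trick above is unavailable, so one must instead pick a maximal subgroup $M$ and a cyclic $p$-subgroup $A\le M$ finely adapted to the subgroup structure of $G$ (for instance with $p$ a suitably large prime divisor of $|G|$ and $A$ tied to a Sylow $p$-subgroup or to its normalizer), and then build the injection demanded by (ii) directly. A further complication is that the induction must hand back a witness for $L(M)$ that is simultaneously well-behaved inside $L(G)$, which forces a somewhat more refined inductive statement than bare Frankl; and settling the simple groups themselves appears to require either the classification of finite simple groups or a careful case-by-case structural analysis. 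This last point is where I anticipate the real difficulty.
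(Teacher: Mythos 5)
There is a genuine gap: your argument stops exactly where the theorem becomes nontrivial. After reducing to perfect groups with no modular maximal subgroup, you state a counting lemma with conditions (i) and (ii), note that (i) comes from induction, and then concede that you do not know how to choose $M$ and $A$ so that (ii) holds --- and indeed that the inductive witness for $L(M)$ need not be compatible with (ii), so that some ``more refined inductive statement'' would be needed, which you never formulate. The nonabelian simple groups (and, more generally, perfect groups without modular maximal subgroups, which need not be simple) are left entirely untouched, and this is the whole content of the theorem: nothing in your write-up produces the injection demanded by (ii) for, say, $G=PSL_2(q)$.

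The missing ideas are two. First, one does not need a modular \emph{coatom}: every normal subgroup $N$ of $G$ is left-modular in $L(G)$ by the Dedekind identity, and the Main Technical Theorem (Theorem~\ref{thm:TechLattices}) applies to any left-modular $m<\top$ together with join-irreducibles $x,y$ satisfying $m\vee x\vee y=\top$; the required injection is $\alpha\mapsto m\wedge\alpha$ on $[x\vee y,\top]$ (injective by left-modularity, with image missing $x$ since $x\not\leq m$), combined with an arbitrary injection of $[x,\top]\setminus[x\vee y,\top]$ into $[y,\top]\setminus[x\vee y,\top]$ after arranging $\left|[x,\top]\right|\leq\left|[y,\top]\right|$. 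Second, the group-theoretic input is a \emph{generation} statement rather than a subgroup-counting statement: taking $N$ to be a maximal \emph{normal} subgroup (not a maximal subgroup), the quotient $G/N$ is simple, and by King's Prime Generation Theorem (which does rest on the Classification) every nonabelian finite simple group is generated by an involution and an element of prime order; these lift to elements of prime-power order in $G$, i.e.\ to join-irreducibles $\left\langle x\right\rangle ,\left\langle y\right\rangle $ with $N\vee\left\langle x\right\rangle \vee\left\langle y\right\rangle =G$. This settles all groups at once, with no induction; the simple case is exactly the case $N=1$, where $\bot$ is trivially left-modular and King's theorem carries the load. Your counting lemma and the complement trick are correct as far as they go, but without a generation-type input of this kind your condition (ii) cannot be verified for the simple groups --- precisely the point where you yourself locate the difficulty.
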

Subgroup lattices of groups form a large family of lattices. Indeed,
it is an important open question (first asked by Pálfy and Pudlák
\citep{Palfy/Pudlak:1980}) as to whether every finite lattice occurs
as an interval in the subgroup lattice of some finite group. Although
most experts on the topics appear to believe the answer to the Pálfy-Pudlák
question to be negative, progress has been somewhat limited. Indeed,
the problem is difficult \citep{Baddeley/Lucchini:1997} even for
lattices of height 2! See \citep{Aschbacher:2013} and its references
for further discussion of the Pálfy-Pudlák question and attempts to
disprove it.

In light of the question of Pálfy and Pudlák, it would be highly interesting
to settle Frankl's Conjecture in intervals of the form $[H,G]$ of
$L(G)$. We cannot do this in general, but give group-theoretic sufficient
conditions. We will state these conditions carefully in Corollary~\ref{cor:TechSgIntervals}.
We also verify that Frankl's Conjecture holds for every interval in
a solvable group in Corollary~\ref{cor:SolvIntervals}.

\subsection{Modular elements, subgroup lattices, and Frankl's Conjecture}

An essential tool in the proof of Theorem~\ref{thm:MainThm} also
has applications to many other lattices. For this reason, we give
it in a quite general form.

An element $m$ of a lattice $L$ is \emph{left-modular} if for every
$a<b$ in $L$, the expression $a\vee m\wedge b$ can be written without
parentheses. That is, if $a\vee(m\wedge b)=(a\vee m)\wedge b$ for
every $a<b$. We show:
\begin{thm}[Main Technical Theorem]
 \label{thm:TechLattices}Let $L$ be a lattice, let $m\in L\setminus\{\top\}$
be left-modular, and let $x,y\in L$ be (not necessarily distinct)
join-irreducibles. If $m\vee x\vee y=\top$, then $L$ satisfies Frankl's
Conjecture with either $x$ or $y$.
\end{thm}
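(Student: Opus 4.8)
The plan is to exhibit, for an appropriately chosen element $c$ with $c\vee m=\top$, an injection from the up\nobreakdash-set $[c,\top]$ into the down\nobreakdash-set $[\bot,m]$, obtained from the map $z\mapsto z\wedge m$; left\nobreakdash-modularity will supply an explicit inverse. The key computation I would do first is: \emph{if $c\vee m=\top$, then $z=c\vee(z\wedge m)$ for every $z\in[c,\top]$.} When $c<z$ this is immediate from left\nobreakdash-modularity of $m$ applied to the pair $c<z$, since $c\vee(m\wedge z)=(c\vee m)\wedge z=\top\wedge z=z$; and when $c=z$ it is trivial. It follows that $z\mapsto z\wedge m$ is injective on $[c,\top]$ whenever $c\vee m=\top$, because $c\vee(z\wedge m)$ recovers $z$.

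First I would dispose of the degenerate possibilities $m\vee x=\top$ and $m\vee y=\top$. If $m\vee x=\top$, then taking $c=x$ gives an injection $[x,\top]\hookrightarrow[\bot,m]$; since $x\not\leq m$ (else $m=m\vee x=\top$), the down\nobreakdash-set $[\bot,m]$ contains no element above $x$, so the injection lands in $L\setminus[x,\top]$ and hence $\lvert[x,\top]\rvert\leq\frac12\lvert L\rvert$. As $x$ is join\nobreakdash-irreducible, this already witnesses Frankl's Conjecture for $L$ with $x$, and the case $m\vee y=\top$ is symmetric. So from now on I may assume $m\vee x\neq\top\neq m\vee y$, which combined with $m\vee x\vee y=\top$ forces $x\not\leq m$ and $y\not\leq m$.

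In the remaining case I would not try to control $[x,\top]$ by itself --- there is no handle on $m\vee x$ --- but instead bound the sum $\lvert[x,\top]\rvert+\lvert[y,\top]\rvert$. Apply the identity above with $c=x\vee y$, which is legitimate since $(x\vee y)\vee m=\top$: the map $z\mapsto z\wedge m$ injects $[x\vee y,\top]=[x,\top]\cap[y,\top]$ into $[\bot,m]$. Since $x\not\leq m$ and $y\not\leq m$, the down\nobreakdash-set $[\bot,m]$ is disjoint from $[x,\top]\cup[y,\top]$, so $\lvert[x,\top]\cap[y,\top]\rvert\leq\lvert[\bot,m]\rvert\leq\lvert L\rvert-\lvert[x,\top]\cup[y,\top]\rvert$. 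Inclusion--exclusion then gives $\lvert[x,\top]\rvert+\lvert[y,\top]\rvert=\lvert[x,\top]\cup[y,\top]\rvert+\lvert[x,\top]\cap[y,\top]\rvert\leq\lvert L\rvert$, so at least one of $[x,\top]$, $[y,\top]$ has size at most $\frac12\lvert L\rvert$, and the corresponding (join\nobreakdash-irreducible) element of $\{x,y\}$ witnesses Frankl's Conjecture for $L$.

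I do not expect a serious obstacle once this outline is in place: left\nobreakdash-modularity enters only through the one\nobreakdash-line identity $z=c\vee(z\wedge m)$, and join\nobreakdash-irreducibility of $x$ and $y$ is used only to guarantee that they are admissible witnesses in Conjecture~\ref{conj:Frankl}. The one genuinely nonobvious point is the decision to prove an injection on the intersection $[x\vee y,\top]$ and feed it through inclusion--exclusion, rather than attempting something directly about $[x,\top]$ or $[y,\top]$ alone.
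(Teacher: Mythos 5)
Your proposal is correct and is essentially the paper's own argument: the heart of both is the map $\alpha\mapsto m\wedge\alpha$ from $[x\vee y,\top]$ into $[\bot,m]$, injective because left-modularity gives $x\vee y\vee(m\wedge\alpha)=(x\vee y\vee m)\wedge\alpha=\alpha$, together with the observation that $x\not\leq m$ and $y\not\leq m$ keep the image away from $[x,\top]\cup[y,\top]$. The remaining differences are bookkeeping: the paper converts the comparison $|[x,\top]|\leq|[y,\top]|$ into a second injection $[x,\top]\setminus[x\vee y,\top]\to[y,\top]\setminus[x\vee y,\top]$ so as to embed the smaller up-set into its complement, while you finish by inclusion--exclusion via $|[x,\top]|+|[y,\top]|=|[x,\top]\cup[y,\top]|+|[x\vee y,\top]|\leq|L|$, and your separate handling of the cases $m\vee x=\top$ or $m\vee y=\top$ plays the same role as the paper's reduction replacing $(m,x,y)$ by $(m,y,y)$ when $x\leq m$.
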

It follows from the well-known Dedekind Identity (see Section~\ref{sub:ModSglat}
below) that any normal subgroup $N$ of $G$ is left-modular in $L(G)$.
It is straightforward to see that a subgroup $X$ is a join-irreducible
in $L(G)$ if and only if $X$ is cyclic of prime-power order. Thus,
we obtain the following as an easy consequence of Theorem~\ref{thm:TechLattices}.
\begin{cor}
\label{cor:TechGroups}If $G$ is a group with $N\normalin G$, and
$G/N$ is generated by at most two elements of prime-power order,
then $L(G)$ satisfies Frankl's Conjecture.
\end{cor}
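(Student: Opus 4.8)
The plan is to apply the Main Technical Theorem (Theorem~\ref{thm:TechLattices}) to $L = L(G)$ with $m = N$. Since $N$ is normal, the Dedekind Identity shows that $N$ is left-modular in $L(G)$; we take $N$ to be a proper subgroup of $G$ (the case $N = G$ being degenerate), so that $m \in L(G) \setminus \{\top\}$ as the theorem requires. It then suffices to exhibit join-irreducibles $x, y \in L(G)$ with $N \vee x \vee y = G$: Theorem~\ref{thm:TechLattices} gives that $L(G)$ satisfies Frankl's Conjecture with $x$ or $y$, which is exactly the assertion of the corollary.

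To produce $x$ and $y$, I would write $G/N = \langle \bar{a}, \bar{b} \rangle$ with $\bar{a}$ and $\bar{b}$ of prime-power order, taking $\bar{b} = \bar{a}$ if $G/N$ is generated by a single such element (which is allowed, since Theorem~\ref{thm:TechLattices} permits $x = y$). The one point needing a short argument is that each generator lifts to an element of $G$ of prime-power order generating the same cyclic subgroup modulo $N$. Indeed, let $a_0 \in G$ be a preimage of $\bar{a}$, suppose $\bar{a}$ has order a power of the prime $p$, write $|a_0| = p^e t$ with $\gcd(p, t) = 1$, and put $a = a_0^{t}$; then $a$ has order $p^e$, a prime power, while $Na = \bar{a}^{t}$ still generates $\langle \bar{a} \rangle$ because $\gcd(t, |\bar{a}|) = 1$. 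Defining $b$ from $\bar{b}$ in the same way, the subgroup $\langle N, a, b \rangle$ contains $N$ and maps onto $G/N = \langle \bar{a}, \bar{b} \rangle$, hence equals $G$.

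Set $x = \langle a \rangle$ and $y = \langle b \rangle$. Each is cyclic of prime-power order and therefore join-irreducible in $L(G)$, and $x \vee y \vee N = \langle a, b, N \rangle = G = \top$; so Theorem~\ref{thm:TechLattices} applies and completes the proof. I do not anticipate a genuine obstacle here --- this is precisely why the excerpt advertises the corollary as an easy consequence --- and the only step that is more than a one-line observation is the passage to prime-power-order lifts, since a preimage of $\bar{a}$ chosen naively may have composite order, and replacing it by the power $a_0^{t}$ is what repairs this while preserving the coset it generates. The remainder is routine bookkeeping for the degenerate cases ($G$ trivial, $N = G$, $G/N$ cyclic).
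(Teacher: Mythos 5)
Your proposal is correct and follows essentially the same route as the paper: take $m=N$ (left-modular by the Dedekind Identity), lift the two prime-power-order generators of $G/N$ to prime-power-order elements $a,b\in G$, and apply Theorem~\ref{thm:TechLattices} to the join-irreducibles $\langle a\rangle,\langle b\rangle$. The only difference is cosmetic: the paper cites the lifting step as a standard exercise (Isaacs, Exercise 3.12), while you prove it inline via the power $a_0^{t}$, which suffices since $\langle Na\rangle=\langle\bar a\rangle$ is all that is needed.
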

Theorem~\ref{thm:MainThm} will follow by combining Corollary~\ref{cor:TechGroups}
with results on finite simple groups. 

We similarly obtain a relative version for upper intervals in groups.
The statement is somewhat harder to work with, as we are not aware
of any short description for join-irreducibles in intervals of subgroup
lattices.
\begin{cor}
\label{cor:TechSgIntervals}Let $G$ be a group and $H$ be a subgroup.
If $X$ and $Y$ are join-irreducibles of the interval $[H,G]$, and
$N\normalin G$ is such that $HN<G$ but $HN\vee X\vee Y=G$, then
the interval $[H,G]$ satisfies Frankl's Conjecture.
\end{cor}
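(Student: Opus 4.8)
The plan is to deduce this directly from the Main Technical Theorem (Theorem~\ref{thm:TechLattices}), applied to the lattice $L=[H,G]$ with left-modular element $m=HN$ and with $x=X$, $y=Y$. First I would record the routine points. For subgroups $A,B$ with $H\leq A,B\leq G$, both $\langle A,B\rangle$ and $A\cap B$ again lie in $[H,G]$, so $[H,G]$ is a lattice whose joins and meets agree with those computed in $L(G)$; since $N\normalin G$, the product $HN$ is a subgroup, and $H\leq HN\leq G$ with $HN\neq G$ by hypothesis, so $m=HN$ is an element of $L\setminus\{\top\}$, where $\top=G$. By hypothesis $X$ and $Y$ are join-irreducibles of $[H,G]$, and $m\vee X\vee Y=HN\vee X\vee Y=G=\top$.

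The only step with real content is checking that $HN$ is left-modular in $[H,G]$, and here I would appeal to the Dedekind Identity (Section~\ref{sub:ModSglat}). Given $A<B$ in $[H,G]$, so $H\leq A<B\leq G$: using $H\leq A$ one has $A\vee HN=\langle A,HN\rangle=AN$, and then $A\leq B$ together with Dedekind gives $(A\vee HN)\wedge B=AN\cap B=A(N\cap B)$. For the other side, $H\leq B$ with Dedekind gives $HN\wedge B=HN\cap B=H(N\cap B)$, so $A\vee(HN\wedge B)=\langle A,H(N\cap B)\rangle=\langle A,N\cap B\rangle$; and since $N\cap B$ is normal in $B$ while $A\leq B$, the subgroup $A(N\cap B)$ already contains $A$ and $N\cap B$, so this join equals $A(N\cap B)$ as well. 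The two expressions coincide, which is left-modularity of $HN$.

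Assembling these pieces, Theorem~\ref{thm:TechLattices} applies and yields that $[H,G]$ satisfies Frankl's Conjecture with $X$ or with $Y$, which is the assertion. I do not expect a genuine obstacle: the corollary is essentially a group-theoretic restatement of Theorem~\ref{thm:TechLattices}, and its one computational ingredient---left-modularity of $HN$ in $[H,G]$---is a direct consequence of Dedekind's modular law. (The case $H=\{1\}$, where $[H,G]=L(G)$, is exactly the slightly simpler reasoning behind Corollary~\ref{cor:TechGroups}.)
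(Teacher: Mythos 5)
Your proposal is correct and matches the paper's intended argument: the paper proves this corollary by exactly the route you take, namely applying Theorem~\ref{thm:TechLattices} inside the lattice $[H,G]$ with $m=HN$, $x=X$, $y=Y$, the left-modularity of $HN$ being the ``similar argument'' via the Dedekind Identity that the paper leaves implicit. Your explicit check that both sides of the modular relation equal $A(N\cap B)$ is precisely the detail the paper omits.
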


\subsection{The Averaged Frankl's Condition}

A related question to Frankl's Conjecture asks for which lattices
the average size over a join-irreducible element (other than $\hat{0}$)
is at most $\frac{1}{2}\left|L\right|$. We call this condition the
\emph{Averaged Frankl's Condition}. The Averaged Frankl's Condition
does not hold for all lattices, but is known to hold for lattices
with a large ratio of elements to join-irreducibles \citep{Czedli:2009}.
The condition obviously holds for uncomplicated subgroup lattices
such as $L(\mathbb{Z}_{p^{n}})$ or $L(\mathbb{Z}_{p}^{n})$. Indeed,
our techniques allow us to show a stronger condition for a restrictive
class of groups.
\begin{prop}
\label{prop:ComplementedGroups}If $G$ is a supersolvable group so
that all Sylow subgroups of $G$ are elementary abelian, then $G$
satisfies Frankl's Conjecture with any join-irreducible $X$.
\end{prop}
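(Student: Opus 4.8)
The plan is to produce, for every subgroup $H$ with $X\le H\le G$, a complement of $X$ inside $H$, and to check that the map sending $H$ to a choice of such a complement is an injection from the interval $[X,G]$ into the family of subgroups of $G$ not containing $X$; this at once yields $|[X,G]|\le|L(G)|-|[X,G]|$, which is the desired conclusion. Note first a reduction: since every Sylow subgroup of $G$ is elementary abelian, a cyclic subgroup of prime-power order is contained in an elementary abelian Sylow subgroup and hence has prime order, so the join-irreducibles of $L(G)$ are exactly the subgroups of prime order. We may therefore assume $X$ has prime order $p$.

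The key ingredient is the group-theoretic fact that \emph{in a finite supersolvable group with elementary abelian Sylow subgroups, every subgroup of prime order has a complement}---a special case of P.~Hall's classical description of complemented groups, which I would prove in the form needed by induction on the order. If $X\normalin H$, pick a Sylow $p$-subgroup $P\supseteq X$; then $X$ has a complement in the elementary abelian group $P$, and since the index $[H:P]$ is coprime to $|X|=p$, Gasch\"utz's theorem promotes this to a complement of $X$ in $H$. If $X$ is not normal in $H$, let $N$ be a minimal normal subgroup of $H$; supersolvability makes $|N|$ prime, and since $X$ and $N$ both have prime order with $X\ne N$ we get $X\cap N=1$, so that $NX/N$ is a subgroup of prime order in the strictly smaller group $H/N$, which is again supersolvable with elementary abelian Sylow subgroups. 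A complement of $NX/N$ in $H/N$ then pulls back to a complement of $X$ in $H$. Since supersolvability and the elementary-abelian-Sylow condition pass to subgroups, every $H$ in $[X,G]$ meets these hypotheses, so the required complements always exist.

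Granting this, choose for each $H\in[X,G]$ a complement $K_{H}$ of $X$ in $H$ and set $\phi(H)=K_{H}$. Because $K_{H}\cap X=1$ while $X\ne1$, we have $X\not\le K_{H}$, so $\phi(H)$ is a subgroup not containing $X$. Moreover $\phi$ is injective: the relation $K_{H}X=H$ defining the complement gives $H=\langle K_{H},X\rangle$, so $H$ is recovered from $K_{H}$, and $K_{H}=K_{H'}$ forces $H=H'$. Together with the previous paragraph this gives the sought injection and finishes the proof.

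The real obstacle is the complement-existence statement, and within it the normal case $X\normalin H$: one cannot invoke Schur--Zassenhaus, since $p$ may divide $[H:X]$, and it is precisely the hypothesis that all Sylow subgroups of $G$ are elementary abelian which makes Gasch\"utz's theorem applicable here. The remaining, combinatorial part is then routine; the one point deserving attention is that \emph{every} member of $[X,G]$, not merely $G$ itself, inherits the standing hypotheses, so that the chosen complements $K_{H}$ are available throughout the interval.
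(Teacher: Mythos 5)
Your proof is correct, but it is routed differently from the paper's. The paper's proof is essentially two lines: it cites Hall's theorem \citep{Hall:1937} that a supersolvable group with elementary abelian Sylow subgroups is complemented, takes a single complement $K$ of $X$ in $G$, and feeds this into Corollary~\ref{cor:SgintGeneralization} (i.e.\ Proposition~\ref{prop:TechGeneralized}), whose underlying injection is the canonical map $\alpha\mapsto K\cap\alpha$ on $[X,G]$, with injectivity coming from the Dedekind identity $\alpha=X(K\cap\alpha)$. You instead build a complement $K_{H}$ of $X$ inside \emph{every} $H\in[X,G]$ --- proving the needed fragment of Hall's theorem from scratch by induction, with Gasch\"utz's theorem handling the case $X$ normal --- and then use the map $H\mapsto K_{H}$, whose injectivity is the elementary observation $H=K_{H}X$. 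What each approach buys: the paper's route is shorter given its general machinery, needs only one complement, never has to check that the hypotheses pass to subgroups, and showcases the left-modularity/Dedekind mechanism that drives the whole paper; your route is self-contained on the group-theory side (no appeal to Hall's classification, at the price of invoking Gasch\"utz) and its combinatorial half is even simpler, since \emph{any} choice of complements works and no modular identity is needed. It is worth noting that you could have avoided the interval-by-interval induction entirely: once a single complement $K$ of $X$ in $G$ exists, the Dedekind identity shows $K\cap H$ is already a complement of $X$ in every $H\geq X$, which is precisely the canonical choice the paper's argument exploits.
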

\noindent Supersolvable groups with elementary abelian subgroups are
also known as \emph{complemented groups}, and were first studied by
Hall \citep{Hall:1937}. We don't know whether the subgroup lattices
of arbitrary groups always satisfy the Averaged Frankl's Condition.

\subsection{Other lattices}

Left-modular elements also occur in lattices from elsewhere in combinatorics.
A situation that is both easy and useful is:
\begin{cor}
If a lattice $L$ has a left-modular coatom $m$, then $L$ satisfies
Frankl's Conjecture.\end{cor}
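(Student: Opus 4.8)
The plan is to derive this as an essentially immediate consequence of the Main Technical Theorem (Theorem~\ref{thm:TechLattices}). Since $m$ is a coatom, we have $m\in L\setminus\{\top\}$, and $m$ is left-modular by hypothesis; so to apply Theorem~\ref{thm:TechLattices} it suffices to exhibit a single join-irreducible $x$ with $m\vee x=\top$, after which we may take $y=x$ to obtain $m\vee x\vee y=\top$.

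To produce such an $x$, I would invoke the standard fact that in a finite lattice every element is the join of the join-irreducibles lying weakly below it; in particular $\top$ is the join of the set of \emph{all} join-irreducibles of $L$. If every join-irreducible were $\le m$, this would force $\top\le m$, contradicting $m<\top$ (which holds because $L$ has a coatom, hence at least two elements). Thus some join-irreducible $x$ satisfies $x\not\le m$, and since $m$ is a coatom, $m<m\vee x\le\top$ gives $m\vee x=\top$.

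Applying Theorem~\ref{thm:TechLattices} with this $x$ and with $y=x$ then shows that $L$ satisfies Frankl's Conjecture with $x$, completing the argument. I do not expect any genuine obstacle here: the corollary is a direct specialization of Theorem~\ref{thm:TechLattices}, the only point needing (minimal) care being the join-density of the join-irreducibles in a finite lattice, which is precisely what prevents all of them from lying below the coatom $m$.
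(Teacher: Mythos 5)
Your proposal is correct and follows essentially the same route as the paper: both reduce to Theorem~\ref{thm:TechLattices} by producing a join-irreducible $x\not\leq m$ (hence $m\vee x=\top$ since $m$ is a coatom) and taking $y=x$. The only cosmetic difference is that the paper splits off the trivial case where $\top$ is itself join-irreducible, while you handle it uniformly via join-density of the join-irreducibles.
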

\begin{proof}
If $\top$ is a join-irreducible, then the result is trivial. Otherwise,
there is some join-irreducible $x$ such that $m\vee x=\top$, and
we apply Theorem~\ref{thm:TechLattices}.
\end{proof}
There has been much study of classes of lattices that have a left-modular
coatom. Dually semimodular lattices have every coatom left-modular,
so we recovery the earlier-mentioned result \citep{Reinhold:2000}
that such lattices satisfy Frankl's Conjecture. We also obtain the
new result that \emph{supersolvable} and \emph{left-modular} lattices
(those with a maximal chain consisting of left-modular elements) satisfy
Frankl's Conjecture. See e.g. \citep{McNamara/Thomas:2006} for background
on supersolvable lattices.

Still more generally, the \emph{comodernistic} lattices recently examined
by the second author and Schweig \citep{Schweig/Woodroofe:2016UNP}
are those lattices with a left-modular coatom on every interval. This
class of lattices includes all supersolvable, left-modular, and dually
semimodular lattices. It also includes other large classes of examples,
including subgroup lattices of solvable groups, and $k$-equal partition
lattices.
\begin{thm}
\label{thm:ComodLats}Comodernistic lattices (including supersolvable,
left-modular, and dually semimodular lattices) satisfy Frankl's Conjecture.
\end{thm}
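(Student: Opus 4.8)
The plan is to deduce Theorem~\ref{thm:ComodLats} directly from the preceding corollary, that a lattice with a left-modular coatom satisfies Frankl's Conjecture. Let $L$ be comodernistic with $|L|\ge 2$. By definition $L$ has a left-modular coatom on each of its intervals; taking the interval $[\hat{0},\hat{1}] = L$ itself, we see that $L$ has a left-modular coatom. The corollary now applies and yields a join-irreducible witnessing Frankl's Conjecture for $L$. (If $|L| = 1$ there is nothing to prove.) Thus the only content beyond Theorem~\ref{thm:TechLattices} and its corollary is the observation that ``comodernistic'' is designed precisely so as to supply the hypothesis ``$L$ has a left-modular coatom'' — indeed we use only a sliver of the comodernistic hypothesis, its instance on the top interval.

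What genuinely needs to be addressed is why the advertised families --- dually semimodular, left-modular, and supersolvable lattices --- are comodernistic. For dually semimodular lattices this is immediate: dual semimodularity descends to every interval, and every coatom of a dually semimodular lattice is left-modular, the classical fact also underlying \citep{Reinhold:2000}. For left-modular lattices, those possessing a maximal chain $\hat{0} = x_0 < x_1 < \cdots < x_n = \hat{1}$ of left-modular elements, and for supersolvable lattices, where such a chain exists by the characterization surveyed in \citep{McNamara/Thomas:2006}, I would invoke the stability of left-modularity under passage to intervals: if $m$ is left-modular in $L$ and $a\le b$, then $a\vee(m\wedge b)$ is left-modular in $[a,b]$. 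Applying this to $x_1,\dots,x_{n-1}$ and deleting repetitions produces a maximal chain of left-modular elements of $[a,b]$, hence a left-modular coatom of $[a,b]$; so $L$ is comodernistic. All of this is carried out in \citep{Schweig/Woodroofe:2016UNP}, and in the write-up I would simply cite that paper for the membership claims --- which also furnish the further examples, such as subgroup lattices of solvable groups, that feed into Corollary~\ref{cor:SolvIntervals}.

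There is essentially no obstacle in the proof of the theorem itself; the difficulty has all been front-loaded into the Main Technical Theorem. The only points requiring the slightest care are the restriction lemma for left-modular elements used above, which is a short lattice-identity computation, and the harmless remark that any lattice with at least two elements has a coatom, so that the corollary genuinely has something on which to act.
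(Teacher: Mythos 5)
Your proposal is correct and matches the paper's (essentially immediate) argument: the paper likewise derives Theorem~\ref{thm:ComodLats} by applying the left-modular-coatom corollary to the top interval of a comodernistic lattice, with the membership of the supersolvable, left-modular, and dually semimodular families delegated to \citep{Schweig/Woodroofe:2016UNP} and standard facts. Your extra remarks on restricting left-modularity to intervals are fine but not needed beyond the citation.
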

Subgroup lattices of solvable groups are one family of examples of
comodernistic lattices \citep[Theorem 1.7]{Schweig/Woodroofe:2016UNP}.
That is, every interval in the subgroup lattice of a solvable group
has a left-modular coatom. It follows immediately that:
\begin{cor}
\label{cor:SolvIntervals}If $G$ is a solvable group, then every
interval in $L(G)$ satisfies Frankl's Conjecture.
\end{cor}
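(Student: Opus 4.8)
The plan is to derive this immediately from Theorem~\ref{thm:ComodLats}, using the cited fact that subgroup lattices of solvable groups are comodernistic. Recall from \citep{Schweig/Woodroofe:2016UNP} that a lattice is \emph{comodernistic} precisely when every one of its intervals has a left-modular coatom. The first thing I would note is that this property passes to intervals for free: an interval of an interval of $L$ is again an interval of $L$, so if $L$ is comodernistic then so is $[a,b]$ for every $a \le b$ in $L$.

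Next I would invoke \citep[Theorem 1.7]{Schweig/Woodroofe:2016UNP}, which says that $L(G)$ is comodernistic whenever $G$ is solvable. Combined with the previous remark, this shows that every interval $[H,K]$ of $L(G)$ is itself comodernistic. Applying Theorem~\ref{thm:ComodLats} to $[H,K]$ then yields that $[H,K]$ satisfies Frankl's Conjecture, which is exactly what is claimed.

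There is essentially no obstacle left at this stage: the substantive work lives in Theorem~\ref{thm:ComodLats} (and behind it Theorem~\ref{thm:TechLattices}) and in the already-established classification of subgroup lattices of solvable groups as comodernistic. The only point worth making explicit is the closure of the comodernistic class under taking intervals, which is immediate from the definition. One could alternatively argue more directly: after replacing $G$ by $K$ one may assume the interval is $[H,G]$, choose a left-modular coatom $m$ of $[H,G]$ guaranteed by solvability, take any join-irreducible $x$ of $[H,G]$ with $m \vee x = G$, and apply Theorem~\ref{thm:TechLattices} inside $[H,G]$; but routing through Theorem~\ref{thm:ComodLats} is cleaner and avoids re-deriving the coatom-to-Frankl step.
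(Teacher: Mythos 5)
Your proposal is correct and follows essentially the same route as the paper: the paper likewise deduces the corollary immediately from Theorem~\ref{thm:ComodLats} together with \citep[Theorem 1.7]{Schweig/Woodroofe:2016UNP}, using that comodernisticity (every interval having a left-modular coatom) automatically applies to every interval of $L(G)$. Your explicit remark that the comodernistic property passes to intervals, and your alternative direct argument via a left-modular coatom of $[H,G]$, are both consistent with what the paper leaves implicit.
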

Since the $\bot$ element of any lattice is left-modular, Theorem~\ref{thm:TechLattices}
also yields the following:
\begin{cor}
If $L$ is a lattice such that $\top=x\vee y$ for join-irreducibles
$x,y$, then $L$ satisfies Frankl's Conjecture.
\end{cor}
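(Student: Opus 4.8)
The plan is to obtain this corollary as an immediate specialization of the Main Technical Theorem (Theorem~\ref{thm:TechLattices}), taking the left-modular element $m$ to be $\bot$. First I would observe that the hypotheses already force $L$ to have at least two elements: a join-irreducible is by convention distinct from $\bot$, so the mere existence of $x$ gives $|L|\geq 2$, hence $\bot\neq\top$ and thus $\bot\in L\setminus\{\top\}$, as required by the theorem. Second, I would record the (well-known, and already invoked in the excerpt) fact that $\bot$ is left-modular in every lattice: for any $a<b$ we have $a\vee(\bot\wedge b)=a\vee\bot=a=a\wedge b=(a\vee\bot)\wedge b$.

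With these two points in hand, the hypothesis $\top=x\vee y$ gives $\bot\vee x\vee y=x\vee y=\top$, so Theorem~\ref{thm:TechLattices} applies verbatim with this choice of $m$, $x$, and $y$, and concludes that $L$ satisfies Frankl's Conjecture with either $x$ or $y$. There is essentially no obstacle to overcome here; the only content is the bookkeeping check that $\bot$ legitimately plays the role of the left-modular element $m$ and that the nondegeneracy condition $m\neq\top$ is automatic from the presence of a join-irreducible.
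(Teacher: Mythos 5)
Your proposal is correct and matches the paper's argument exactly: the paper derives this corollary from Theorem~\ref{thm:TechLattices} precisely by noting that $\bot$ is left-modular in any lattice and taking $m=\bot$. Your extra bookkeeping (that the existence of a join-irreducible forces $\bot\neq\top$) is a harmless and accurate addition to what the paper leaves implicit.
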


\subsection{Organization}

In Section~\ref{sec:Groups} we will discuss the group-theoretic
aspects of the problem. We will complete the proof of Corollary~\ref{cor:TechGroups}
and Theorem~\ref{thm:MainThm}, pending only on the proof of Theorem~\ref{thm:TechLattices}.
In Section~\ref{sec:ProofTechThm}, we will prove Theorem~\ref{thm:TechLattices}
and generalizations, as well as Proposition~\ref{prop:ComplementedGroups}.

\section*{Acknowledgements}

We would like to thank the administrators and community of MathOverflow,
which brought us together to work on the problem \citep{MO-Question}.
We also thank Tobias Fritz and Marco Pellegrini for carefully reading
earlier drafts. Marco Pellegrini in particular provided us with additional
background on generation of Suzuki groups, including the useful reference
to \citep{Evans:1987}.

\section{\label{sec:Groups}Groups, generation, and subgroup lattices}

The main purpose of this section is to prove Theorem~\ref{thm:MainThm},
as we do in Section~\ref{sub:ProofMainThm}. We first begin with
some basic background on the combinatorics of subgroup lattices.

\subsection{\label{sub:ModSglat}Modular elements in subgroup lattices, and the
details of Corollary~\ref{cor:TechGroups}}

The Dedekind Identity is often assigned as an exercise \citep[Exercise 2.9]{Isaacs:1994}
in a graduate algebra course:
\begin{lem}[Dedekind Modular Identity]
 If $H,K,N$ are subgroups of a group $G$ such that $H\leq K$,
then $H(N\cap K)=HN\cap K$.
\end{lem}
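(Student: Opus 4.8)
The plan is to prove the set equality $H(N\cap K)=HN\cap K$ by establishing the two inclusions separately. Throughout we use only the hypothesis $H\leq K$; no normality assumption on $N$ is needed, since both sides are being compared merely as subsets of $G$ (the product $HN$ need not be a subgroup). I expect the whole argument to be short and elementary — the interest here is in identifying exactly where the hypothesis $H\leq K$ enters.

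For the inclusion $H(N\cap K)\subseteq HN\cap K$: take $h\in H$ and $m\in N\cap K$. Then $hm\in HN$ is immediate, and since $h\in H\leq K$ and $m\in K$ we also get $hm\in K$; hence $hm\in HN\cap K$. This direction is formal and uses $H\leq K$ only to absorb $h$ into $K$.

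For the reverse inclusion $HN\cap K\subseteq H(N\cap K)$: let $g\in HN\cap K$ and write $g=hn$ with $h\in H$ and $n\in N$. The key observation is that $n=h^{-1}g$ must lie in $K$, because $h^{-1}\in H\leq K$ and $g\in K$; therefore $n\in N\cap K$ and $g=hn\in H(N\cap K)$.

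The single nontrivial point — the closest thing to an obstacle in an otherwise routine verification — is this last step, where $H\leq K$ is what lets us promote $n$ from membership in $N$ to membership in $N\cap K$. One could instead package the argument as the slogan "$K$ absorbs the left factor $H$," but the explicit double-inclusion makes the role of the hypothesis transparent, which is why I would present it that way.
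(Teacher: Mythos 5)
Your proof is correct: both inclusions are verified accurately, and you correctly identify that $H\leq K$ is exactly what allows $n=h^{-1}g$ to be promoted into $N\cap K$, with no normality assumption on $N$ needed. The paper itself gives no proof, citing the identity as a standard textbook exercise (Isaacs, Exercise 2.9), and your double-inclusion argument is precisely the standard one intended there.
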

It is also well known that $HN$ is a subgroup of $G$ if and only
if $HN=NH=H\vee N$. These conditions are obviously satisfied when
$N$ is a normal subgroup, and are sometimes otherwise satisfied. 

It is thus immediate from the Dedekind Identity that whenever $HN$
is a subgroup, we also have that $N$ satisfies the modular relation
with $H$ and any $K>H$. In particular, we recover our earlier claim
that normal subgroups are left-modular in $L(G)$.

The proof of Corollary~\ref{cor:TechGroups} follows from this fact,
together with another routine exercise: If $\overline{x}$ and $\overline{y}$
are elements of prime-power order in $G/N$, then there are $x,y\in G$
of prime-power order such that $\overline{x}=Nx$, $\overline{y}=Ny$
\citep[Exercise 3.12]{Isaacs:1994}. In particular, the modular subgroup
$N$ and the join-irreducibles $\left\langle x\right\rangle $ and
$\left\langle y\right\rangle $ satisfy the conditions of Theorem~\ref{thm:TechLattices}.

Corollary~\ref{cor:TechSgIntervals} follows by a similar argument.

\subsection{\label{sub:ProofMainThm}Proof of Theorem~\ref{thm:MainThm}}

We prove Theorem~\ref{thm:MainThm} by combining Corollary~\ref{cor:TechGroups}
with facts about finite simple groups. King recently proved in \citep{King:2016UNP}:
\begin{thm}[Prime Generation Theorem \citep{King:2016UNP}]
\label{thm:PrimeGen}  If $G$ is any nonabelian finite simple group,
then $G$ is generated by an involution and an element of prime order.
\end{thm}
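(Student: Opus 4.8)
The plan is to reduce the statement to the classification of finite simple groups and then verify the claim family by family. First I would set up the reduction: for a nonabelian finite simple group $G$, we must exhibit an involution $t$ and an element $s$ of prime order $p$ (not necessarily distinct primes, though $t$ forces one of them to be $2$) with $\left\langle t,s\right\rangle=G$. Since it is classical that every nonabelian finite simple group is $2$-generated (indeed generated by an involution and one further element, by the work of Malle--Saxl--Weigel and others), the real content is upgrading the "one further element" to one of prime order. I would organize the casework as: (i) alternating groups $A_n$ for $n\geq 5$; (ii) the sporadic groups, handled by a finite check (e.g.\ consulting the ATLAS for a suitable pair of conjugacy-class representatives); (iii) the groups of Lie type, which is by far the largest case and should be subdivided by Lie rank.

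For the alternating groups, the standard trick is to take $t$ a product of transpositions and $s$ a $3$-cycle or, when $3\nmid$ works poorly, an element of prime order equal to $n$ or $n-1$ (using a large prime in the Bertrand range); one checks the generated subgroup is transitive and primitive, hence by Jordan's theorem contains $A_n$. For the groups of Lie type, the approach I would take is to use the classification of maximal subgroups together with Aschbacher's theorem: pick $s$ of prime order inside a conveniently chosen torus or unipotent subgroup (prime orders are plentiful among semisimple elements via Zsigmondy primes dividing $q^k-1$), pick $t$ an involution in a complementary position, and rule out each class of maximal overgroup. Low-rank and small-field exceptions—where Zsigmondy primes may fail, or where the generic argument breaks—would be dispatched by direct computation, likely with \texttt{GAP} or \texttt{Magma}.

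The hard part will be the uniform treatment of the groups of Lie type: controlling which maximal subgroups can contain the chosen pair $(t,s)$ as the rank and field size vary, and in particular handling the finitely many small cases not covered by the generic Zsigmondy-prime argument. This is exactly the kind of delicate, CFSG-dependent bookkeeping that occupies the bulk of King's paper; for the purposes of this article I would simply cite \citep{King:2016UNP} for the full verification. Indeed, since the excerpt only needs Theorem~\ref{thm:PrimeGen} as an input to combine with Corollary~\ref{cor:TechGroups}, the proof here amounts to this citation, with the remarks above indicating the shape of the argument rather than reproving it.
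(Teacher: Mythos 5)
Your proposal matches the paper's treatment: Theorem~\ref{thm:PrimeGen} is King's result, and the paper likewise takes it as a black box, citing \citep{King:2016UNP} and only surveying the CFSG-based history (the $(2,3)$- and $(2,5)$-generation results and King's Zsigmondy-prime argument for the classical groups) rather than reproving it. Your citation-plus-sketch is essentially the same approach at the same level of detail.
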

Whenever $N$ is a maximal normal subgroup of $G$, the quotient $G/N$
is simple. Of course, abelian simple groups are generated by a single
element of prime order. Nonabelian simple groups are handled by Theorem~\ref{thm:PrimeGen}.
Theorem~\ref{thm:MainThm} now follows from Corollary~\ref{cor:TechGroups}.

\subsection{Overview of generation of simple groups by elements of prime order}

The substantive work of King \citep{King:2016UNP} in proving Theorem~\ref{thm:PrimeGen}
builds on a large body of preceding work. We will briefly survey some
history and mathematical details. We assume basic knowledge of the
Classification of Finite Simple Groups in this discussion, but will
not assume any such elsewhere in the paper.

A group $G$ is said to be \emph{$(p,q)$-generated} if $G$ is generated
by an element of order $p$ and one of order $q$. The case of $(2,3)$-generation
is particularly well-studied in the literature, as such groups are
the quotients of the infinite group $PSL_{2}(\mathbb{Z})$. In addition
to the references below, see e.g. \citep{Pellegrini/Tamburini:2015,Vsemirnov:2011}.

The following has been known to hold for some time.
\begin{prop}
\label{prop:PrimeGen235} With at most finitely many exceptions, every
nonabelian finite simple group is either $(2,3)$- or $(2,5)$-generated.
\end{prop}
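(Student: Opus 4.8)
The natural route is to verify the statement family by family via the Classification of Finite Simple Groups, sweeping all sufficiently small groups into the finite exceptional set. The sporadic groups contribute only finitely many cases and may be ignored (or checked by machine). For the alternating groups it is a classical fact that $A_n$ is $(2,3)$-generated for all but finitely many $n$, the small exceptions being classically determined; those finitely many cases go into the exceptional set. This reduces the problem to the groups of Lie type, which is where the real content lies.

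For the bulk of the groups of Lie type I would use the probabilistic method of Liebeck and Shalev. Fix a prime $k\in\{3,5\}$, let $\mathcal{I}$ be the set of involutions of $G$ and let $\mathcal{J}$ be the set of elements of order $k$, and consider the probability $P_k(G)$ that a uniformly random pair $(x,y)\in\mathcal{I}\times\mathcal{J}$ generates $G$. Since the probability of non-generation is bounded by the sum, over the maximal subgroups $M$ of $G$, of the probability that both $x$ and $y$ lie in $M$, it suffices to show this sum tends to $0$ as $|G|\to\infty$. This is controlled by combining two ingredients: (i) the classification of maximal subgroups of groups of Lie type (Aschbacher's theorem and its refinements), which bounds how many maximal subgroups one must consider and of what index; and (ii) uniform upper bounds on the character ratios $|\chi(g)|/\chi(1)$ for elements $g$ of groups of Lie type (from Deligne--Lusztig theory and refinements due to Gluck, Fulman--Guralnick, and Liebeck--Shalev), which enter both the estimates for $|\mathcal{I}\cap M|$ and $|\mathcal{J}\cap M|$ and, via Frobenius's formula for the number of solutions of $xy=z$ with $x,y$ in prescribed conjugacy classes, the count of pairs $(x,y)$ with $\langle x,y\rangle=G$. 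Carried out with $k=3$, this shows $P_3(G)\to1$, so all but finitely many classical groups are $(2,3)$-generated; the only \emph{infinite} obstruction is the symplectic family $PSp_4(q)$ with $q$ a power of $2$ or of $3$, which fails to be $(2,3)$-generated for reasons special to that rank and characteristic.

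Two exceptional families then remain. For $PSp_4(2^a)$ and $PSp_4(3^a)$ one checks that $5$ divides the order of every such group and re-runs the identical estimate with $k=5$, obtaining $(2,5)$-generation for all but finitely many of them. For the exceptional groups of Lie type one argues similarly --- or appeals to the explicit matrix constructions of L\"ubeck and Malle and earlier authors --- to obtain $(2,3)$-generation with only finitely many exceptions; the sole family among these having no element of order $3$ is the Suzuki family ${}^2B_2(q)$, and for these one verifies $5\mid|{}^2B_2(q)|$ and establishes $(2,5)$-generation (as in the description of generators of Suzuki groups given by Evans). Assembling the pieces proves the proposition.

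The step I expect to be the main obstacle --- indeed the technical heart of the argument --- is ingredient (ii) in the classical-groups case: producing character-ratio bounds uniform across all the classical families and sharp enough to defeat the growth in the number and index of maximal subgroups, while correctly isolating the genuine $PSp_4$ exceptions together with the handful of small-rank anomalies in characteristics $2$ and $3$. Everything else is either finite bookkeeping or a mechanical repetition of the same estimate with a different prime.
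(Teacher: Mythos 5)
Your overall plan is the same as the paper's: the paper "proves" this proposition purely by assembling the literature along the CFSG family decomposition --- Miller for $A_n$ (with $A_6,A_7,A_8$ being $(2,5)$-generated), Liebeck--Shalev for the classical groups, special treatment of $PSp_4$, L\"ubeck--Malle (after Malle) for the exceptional groups, and Evans' $(2,5)$-generation of the Suzuki groups --- and your sketch follows exactly this decomposition, merely opening up the probabilistic machinery inside the Liebeck--Shalev step instead of citing it as a black box.

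There is, however, one concrete gap in the way you dispatch the symplectic rank-two family. You assert that running the probabilistic estimate with $k=3$ gives $P_3(G)\to 1$ for all classical groups, with the \emph{only} infinite obstruction being $PSp_4(q)$ for $q$ a power of $2$ or $3$. That overstates what the Liebeck--Shalev method delivers: their theorem excludes the \emph{entire} family $PSp_4(q)$, in every characteristic, because the fixed-point-ratio/character-ratio bounds are too weak in rank $2$ (the subgroups of type $Sp_2(q^2){.}2$ and related maximal subgroups swallow too large a proportion of the involutions and order-$3$ elements), and indeed the limiting probability for $PSp_4(q)$ is not $1$ even when $\gcd(q,6)=1$. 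Since $PSp_4(q)$ with $p\geq 5$ is an infinite family, it cannot be absorbed into the finite exceptional set, so as written your argument leaves it uncovered. The paper closes exactly this case by citing the constructive result of Cazzola and Di Martino that $PSp_4(q)$ is $(2,3)$-generated in all characteristics other than $2$ and $3$; you need either that reference or a genuinely sharper probabilistic analysis special to $PSp_4$ (showing the limiting probability is positive), neither of which is supplied by the generic estimate you describe. Your handling of the characteristic $2$ and $3$ symplectic groups via $(2,5)$, of the Suzuki groups via Evans, and of the alternating, sporadic, and remaining exceptional groups is consistent with the paper.
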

We summarize the history behind Proposition~\ref{prop:PrimeGen235}.
The alternating group $A_{n}$ was shown to be $(2,3)$-generated
by Miller \citep{Miller:1901} for $n\neq6,7,8$; while $A_{6},A_{7}$
and $A_{8}$ are easily seen to be $(2,5)$-generated. Excluding the
groups $PSp_{4}(q)$, all but finitely many of the classical groups
are $(2,3)$-generated by work of Liebeck and Shalev \citep{Liebeck/Shalev:1996}.
In the same paper \citep{Liebeck/Shalev:1996}, the authors showed
that, excluding finitely many exceptions, in characteristic $2$ or
$3$ the groups $PSp_{4}$ are $(2,5)$-generated. Cazzola and Di
Martino in \citep{Cazzola/DiMartino:1993} showed $PSp_{4}$ to be
$(2,3)$-generated in all other characteristics. Lübeck and Malle
\citep{Lubeck/Malle:1999} (building on earlier work by Malle \citep{Malle:1990,Malle:1995})
showed all simple exceptional groups excluding the Suzuki groups to
be $(2,3)$-generated. Evans \citep{Evans:1987} showed the Suzuki
groups to be $(2,p)$-generated for any odd prime $p$ dividing the
group order, and in particular to be $(2,5)$-generated. Proposition~\ref{prop:PrimeGen235}
now follows by combining the results enumerated here with the Classification
of Finite Simple Groups.

We caution that $PSU_{3}(3^{2})$ is known not to be $(2,3)$-generated
\citep{Wagner:1978}, and since it has order $\left|PSU_{3}(3^{2})\right|=2^{5}\cdot3^{3}\cdot7$,
the group is certainly not $(2,5)$-generated either. 

King's proof of Theorem~\ref{thm:PrimeGen} proceeds by showing that
every classical simple group $G$ is either $(2,3)$-, $(2,5)$-,
or $(2,r)$-generated, where $r$ is a so-called Zsigmondy prime for
$G$.

\section{\label{sec:ProofTechThm}Proof of Theorem~\ref{thm:TechLattices}}

Since $m\neq\top$, we see that $x\vee y\not\leq m$. If $x\leq m$,
then we may replace the triple $m,x,y$ with $m,y,y$ while still
meeting the conditions of the theorem. Thus, we may suppose without
loss of generality that neither $x$ nor $y$ is on the interval $[\bot,m]$.

Suppose without loss of generality that $[x,\top]$ has at most as
many elements as $[y,\top]$. We will show that $\left|[x,\top]\right|\leq\frac{1}{2}\left|L\right|$
by constructing an injection from $[x,\top]$ to its complement in
$L$. 

For the first part of the injection, since $\left|[x,\top]\right|\leq\left|[y,\top]\right|$,
there is some injection that maps 
\[
\varphi_{1}:[x,\top]\setminus[x\vee y,\top]\rightarrow[y,\top]\setminus[x\vee y,\top].
\]

For the second part of the injection, we look at the interval $[x\vee y,\top]$.
(We notice that if $x=y$, then $x\vee y=x=y$, and this will cause
no trouble in what follows.) We map 
\begin{align*}
\varphi_{2}:[x\vee y,\top] & \rightarrow[\bot,m]\\
\alpha & \mapsto m\wedge\alpha.
\end{align*}
As $x\vee y\vee(m\wedge\alpha)=(x\vee y\vee m)\wedge\alpha=\top\wedge\alpha=\alpha$
by left-modularity, the map $\varphi_{2}$ is an injection. Since
$x\not\leq m$, the image of $\varphi_{2}$ is contained in the complement
of $[x,\top]$.

The two maps $\varphi_{1},\varphi_{2}$ have disjoint domains. Combining
them yields the desired injection.

\subsection{Generalizations}

Examining our proof of Theorem~\ref{thm:TechLattices}, we observe
that we do not use the full power of left-modularity, but only that
$m$ satisfies the left-modular relation for any $\alpha>x\vee y$.
Thus, we have actually proved the following generalization:
\begin{prop}
\label{prop:TechGeneralized}Let $L$ be a lattice, and let $x,y\in L$
be (not necessarily distinct) join-irreducibles. If $m\in L\setminus\{\top\}$
satisfies $(x\vee y\vee m)\wedge\alpha=(x\vee y)\vee(m\wedge\alpha)$
for any $\alpha>x\vee y$, and $m\vee x\vee y=\top$, then $L$ satisfies
Frankl's Conjecture with either $x$ or $y$.
\end{prop}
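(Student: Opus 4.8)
The plan is to reread the proof of Theorem~\ref{thm:TechLattices} just given and to observe that the left-modularity of $m$ is used there in exactly one step, which the weaker hypothesis of the proposition already supplies.

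In detail, I would repeat that argument essentially verbatim. Since $m\neq\top$ we still have $x\vee y\not\leq m$, and, exploiting the symmetry of the hypotheses in $x$ and $y$, I would first reduce to the case that neither $x$ nor $y$ lies on $[\bot,m]$. Assuming without loss of generality that $|[x,\top]|\leq|[y,\top]|$, I would then assemble the injection $[x,\top]\hookrightarrow L\setminus[x,\top]$ from an injection $\varphi_1\colon[x,\top]\setminus[x\vee y,\top]\to[y,\top]\setminus[x\vee y,\top]$ (which exists by the cardinality assumption) together with $\varphi_2\colon[x\vee y,\top]\to[\bot,m]$, $\alpha\mapsto m\wedge\alpha$. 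That the domains of $\varphi_1$ and $\varphi_2$ are disjoint, that the image of $\varphi_1$ avoids $[x,\top]$, that the image of $\varphi_2$ lies in $[\bot,m]$ and hence avoids $[x,\top]$ because $x\not\leq m$, and that the two images are disjoint because $y\not\leq m$, are all purely order-theoretic and use only $m\vee x\vee y=\top$ and $m\neq\top$. The one place modularity enters is the injectivity of $\varphi_2$: for that one needs $(x\vee y\vee m)\wedge\alpha=(x\vee y)\vee(m\wedge\alpha)$ for each $\alpha$ in $[x\vee y,\top]$. For $\alpha=x\vee y$ both sides equal $x\vee y$ in any lattice; for $\alpha>x\vee y$ this is exactly the hypothesis of the proposition. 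Hence $\varphi_2$ is injective, the combined map is an injection into the complement of $[x,\top]$, and $|[x,\top]|\leq\frac{1}{2}|L|$.

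The step I expect to require the most care is the opening reduction to the case that neither $x$ nor $y$ lies below $m$. In Theorem~\ref{thm:TechLattices} this was free, because left-modularity of $m$ is a property of $m$ alone; here the hypothesis is attached to the specific element $x\vee y$, so when, say, $x\leq m$ one has to check that the identity above survives the substitution of the triple $m,x,y$ by $m,y,y$, or else simply carry the mild conditions $x\not\leq m$ and $y\not\leq m$ (which hold in our applications) as part of the hypothesis. Once the reduction is in hand, the rest is the bookkeeping above, and $L$ satisfies Frankl's Conjecture with one of $x,y$.
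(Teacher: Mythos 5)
Your reading of where modularity enters is exactly the paper's: the paper justifies this proposition by the same one-line observation that the proof of Theorem~\ref{thm:TechLattices} only invokes the modular relation at elements $\alpha\geq x\vee y$ (the case $\alpha=x\vee y$ being trivial), and your verification of $\varphi_1$, $\varphi_2$, the disjointness of domains, and the roles played by $x\not\leq m$ and $y\not\leq m$ is all correct.

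However, the step you flagged as ``requiring the most care'' is a genuine gap, not merely a delicate point, and the first of your two proposed repairs fails: the identity does \emph{not} survive the substitution of $(m,x,y)$ by $(m,y,y)$. If $x\leq m$, the hypothesis yields $(y\vee m)\wedge\alpha=y\vee(m\wedge\alpha)$ only for $\alpha>x\vee y$, while the argument for the triple $(m,y,y)$ needs it for all $\alpha>y$, and these can genuinely differ. Indeed the proposition as literally stated is false without an extra hypothesis: take the seven-element lattice $L=\{\bot,x,y,m,c,j,\top\}$ with cover relations $\bot\lessdot x$, $\bot\lessdot y$, $x\lessdot m$, $x\lessdot j$, $y\lessdot c$, $y\lessdot j$, and $m,c,j\lessdot\top$. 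Here $x,y$ are join-irreducible atoms, $j=x\vee y$, $m\vee x\vee y=\top$, and the only $\alpha>x\vee y$ is $\top$, where the displayed identity holds trivially; yet $[x,\top]=\{x,m,j,\top\}$ and $[y,\top]=\{y,c,j,\top\}$ each have $4>\frac{7}{2}$ elements, so $L$ does not satisfy Frankl's Conjecture with either $x$ or $y$ (it does satisfy it, but with the join-irreducible $m$). So the correct move is your second option: add $x\not\leq m$ and $y\not\leq m$ to the hypotheses (when $x\leq m$ one automatically gets $y\not\leq m$ from $m\neq\top$, but the example shows that is not enough), after which your write-up is complete and is the intended argument. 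The added condition is available in the paper's main application of this proposition --- in Proposition~\ref{prop:ComplementedGroups} the Hall complement $K$ satisfies $X\cap K=1$, hence $X\not\leq K$ --- though you should note that Corollary~\ref{cor:SgintGeneralization} as stated does not by itself guarantee $X,Y\not\leq K$, so the same caveat propagates there.
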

While the statement of Proposition~\ref{prop:TechGeneralized} appears
notably more complicated than that of Theorem~\ref{thm:TechLattices},
it yields a reasonably uncomplicated corollary for intervals in subgroup
lattices.
\begin{cor}
\label{cor:SgintGeneralization}Let $G$ be a group, let $H<G$, and
let $X,Y$ be join-irreducibles of $[H,G]$. If there is a subgroup
$K$ with $H<K<G$ such that $K(X\vee Y)=G$, then the interval $[H,G]$
satisfies Frankl's Conjecture.
\end{cor}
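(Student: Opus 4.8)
The plan is to obtain this as a direct application of Proposition~\ref{prop:TechGeneralized} to the lattice $L=[H,G]$, taking $m=K$ and using the given join-irreducibles $X$ and $Y$. I would first recall that joins and meets in $[H,G]$ are computed exactly as in $L(G)$: for $A,B\in[H,G]$ one has $A\vee B=\langle A,B\rangle$ and $A\wedge B=A\cap B$, and both of these contain $H$ and so lie in $[H,G]$. Since $H<K<G$, the subgroup $K$ is an element of $[H,G]$ other than its top element $G$, so $m=K$ satisfies the requirement $m\in L\setminus\{\top\}$.

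It then remains to verify the two hypotheses of Proposition~\ref{prop:TechGeneralized}. The condition $m\vee X\vee Y=\top$ is immediate: the set product $K(X\vee Y)$ is contained in $\langle K,X\vee Y\rangle=K\vee X\vee Y$, so the hypothesis $K(X\vee Y)=G$ forces $K\vee X\vee Y=G$. The substantive point is the generalized left-modular relation
\[
(X\vee Y\vee K)\wedge\alpha=(X\vee Y)\vee(K\wedge\alpha)
\]
for every $\alpha\in[H,G]$ with $\alpha>X\vee Y$. Since $X\vee Y\vee K=G$, the left-hand side is simply $\alpha$. For the right-hand side I would invoke the Dedekind Modular Identity, with the roles of its $H$ and $K$ played by $X\vee Y$ and $\alpha$ (legitimate since $X\vee Y\leq\alpha$) and with its $N$ played by $K$: this gives $(X\vee Y)(K\cap\alpha)=(X\vee Y)K\cap\alpha$. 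Because $K(X\vee Y)=G$ is a subgroup it coincides with $(X\vee Y)K$, whence $(X\vee Y)(K\cap\alpha)=G\cap\alpha=\alpha$. Thus the set product $(X\vee Y)(K\cap\alpha)$ is already the subgroup $\alpha$; since the subgroup $(X\vee Y)\vee(K\wedge\alpha)=\langle X\vee Y,\,K\cap\alpha\rangle$ lies between this product and $\alpha$, it too equals $\alpha$, matching the left-hand side.

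With both hypotheses checked, Proposition~\ref{prop:TechGeneralized} yields that $[H,G]$ satisfies Frankl's Conjecture with either $X$ or $Y$, which is the assertion of the corollary. I do not expect any genuine obstacle here; the only place that calls for a little care is the interplay between set products of subgroups and the subgroups they generate — in particular, remembering that it is precisely the fact that $K(X\vee Y)$ is a subgroup that lets us pass freely between $K(X\vee Y)$ and $(X\vee Y)K$ and apply the Dedekind identity in the convenient form above.
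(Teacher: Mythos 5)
Your proof is correct and is essentially the paper's own argument: the corollary is meant to follow by applying Proposition~\ref{prop:TechGeneralized} to $[H,G]$ with $m=K$, where the required relation $(X\vee Y\vee K)\wedge\alpha=(X\vee Y)\vee(K\wedge\alpha)$ for $\alpha>X\vee Y$ comes from the Dedekind Identity together with the fact that the product $K(X\vee Y)=(X\vee Y)K$ is a subgroup, exactly as the paper sets up in Section~\ref{sub:ModSglat}. No gaps to report.
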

We in particular are now able to prove Proposition~\ref{prop:ComplementedGroups}.
\begin{proof}[Proof (of Proposition~\ref{prop:ComplementedGroups}).]
 It follows by a theorem of Hall \citep{Hall:1937} that for every
subgroup $H$ in $G$, there is some subgroup $K$ such that $KH=G$
and $H\cap K=1$. The result follows by combining the theorem of Hall
with Corollary~\ref{cor:SgintGeneralization}.
\end{proof}
\bibliographystyle{hamsplain}
\bibliography{0_Users_russw_Documents_Research_mypapers_Frankl_s_conjecture_for_sg_lattices_Master,1_Users_russw_Documents_Research_mypapers_Frankl_s_conjecture_for_sg_lattices_Webpages}

\def\cprime{$'$}
\providecommand{\bysame}{\leavevmode\hbox to3em{\hrulefill}\thinspace}
\providecommand{\href}[2]{#2}
\begin{thebibliography}{10}

\bibitem{Abe/Nakano:2000}
Tetsuya Abe and Bumpei Nakano, \emph{Lower semimodular types of lattices:
  {F}rankl's conjecture holds for lower quasi-semimodular lattices}, Graphs
  Combin. \textbf{16} (2000), no.~1, 1--16.

\bibitem{Aschbacher:2013}
Michael Aschbacher, \emph{Overgroup lattices in finite groups of {L}ie type
  containing a parabolic}, J. Algebra \textbf{382} (2013), 71--99.

\bibitem{MO-Question}
Various {a}uthors, \emph{A group-theoretic perspective on {F}rankl's union
  closed problem}, MathOverflow, {http://mathoverflow.net/q/154025}.

\bibitem{Polymath11}
\bysame, \emph{Polymath11 -- {FUNC}},
  {https://gowers.wordpress.com/2016/01/21/frankls-union-closed-conjecture-a-possible-polymath-project/}.

\bibitem{Baddeley/Lucchini:1997}
Robert Baddeley and Andrea Lucchini, \emph{On representing finite lattices as
  intervals in subgroup lattices of finite groups}, J. Algebra \textbf{196}
  (1997), no.~1, 1--100.

\bibitem{Bruhn/Schaudt:2015}
Henning Bruhn and Oliver Schaudt, \emph{The journey of the union-closed sets
  conjecture}, Graphs Combin. \textbf{31} (2015), no.~6, 2043--2074.

\bibitem{Cazzola/DiMartino:1993}
M.~Cazzola and L.~Di~Martino, \emph{{$(2,3)$}-generation of {${\rm PSp}(4,q),\
  q=p\sp n,\ p\neq 2,3$}}, Results Math. \textbf{23} (1993), no.~3-4, 221--232.

\bibitem{Czedli:2009}
G{\'a}bor Cz{\'e}dli, \emph{On averaging {F}rankl's conjecture for large
  union-closed-sets}, J. Combin. Theory Ser. A \textbf{116} (2009), no.~3,
  724--729.

\bibitem{Czedli/Schmidt:2008}
G{\'a}bor Cz{\'e}dli and E.~Tam{\'a}s Schmidt, \emph{Frankl's conjecture for
  large semimodular and planar semimodular lattices}, Acta Univ. Palack.
  Olomuc. Fac. Rerum Natur. Math. \textbf{47} (2008), 47--53.

\bibitem{Evans:1987}
Martin~J. Evans, \emph{A note on two-generator groups}, Rocky Mountain J. Math.
  \textbf{17} (1987), no.~4, 887--889.

\bibitem{Hall:1937}
Philip Hall, \emph{Complemented groups}, J. London Math. Soc. \textbf{12}
  (1937), 201--204.

\bibitem{Isaacs:1994}
I.~Martin Isaacs, \emph{Algebra: a graduate course}, Graduate Studies in
  Mathematics, vol. 100, American Mathematical Society, Providence, RI, 2009,
  Reprint of the 1994 original.

\bibitem{King:2016UNP}
Carlisle King, \emph{Generation of finite simple groups by an involution and an
  element of prime order}, {arXiv:1603.04717}.

\bibitem{Liebeck/Shalev:1996}
Martin~W. Liebeck and Aner Shalev, \emph{Classical groups, probabilistic
  methods, and the {$(2,3)$}-generation problem}, Ann. of Math. (2)
  \textbf{144} (1996), no.~1, 77--125.

\bibitem{Lubeck/Malle:1999}
Frank L{\"u}beck and Gunter Malle, \emph{{$(2,3)$}-generation of exceptional
  groups}, J. London Math. Soc. (2) \textbf{59} (1999), no.~1, 109--122.

\bibitem{Malle:1990}
Gunter Malle, \emph{Hurwitz groups and {$G\sb 2(q)$}}, Canad. Math. Bull.
  \textbf{33} (1990), no.~3, 349--357.

\bibitem{Malle:1995}
\bysame, \emph{Small rank exceptional {H}urwitz groups}, Groups of {L}ie type
  and their geometries ({C}omo, 1993), London Math. Soc. Lecture Note Ser.,
  vol. 207, Cambridge Univ. Press, Cambridge, 1995, pp.~173--183.

\bibitem{McNamara/Thomas:2006}
Peter McNamara and Hugh Thomas, \emph{Poset edge-labellings and left
  modularity}, European Journal of Combinatorics \textbf{27} (2006), no.~1,
  101--113, {arXiv:math.CO/0211126}.

\bibitem{Miller:1901}
G.~A. Miller, \emph{On the groups generated by two operators}, Bull. Amer.
  Math. Soc. \textbf{7} (1901), no.~10, 424--426.

\bibitem{Palfy/Pudlak:1980}
P{\'e}ter~P{\'a}l P{\'a}lfy and Pavel Pudl{\'a}k, \emph{Congruence lattices of
  finite algebras and intervals in subgroup lattices of finite groups}, Algebra
  Universalis \textbf{11} (1980), no.~1, 22--27.

\bibitem{Pellegrini/Tamburini:2015}
Marco~Antonio Pellegrini and Maria~Clara Tamburini, \emph{Finite simple groups
  of low rank: {H}urwitz generation and {$(2,3)$}-generation}, Int. J. Group
  Theory \textbf{4} (2015), no.~3, 13--19.

\bibitem{Poonen:1992}
Bjorn Poonen, \emph{Union-closed families}, J. Combin. Theory Ser. A
  \textbf{59} (1992), no.~2, 253--268.

\bibitem{Reinhold:2000}
J{\"u}rgen Reinhold, \emph{Frankl's conjecture is true for lower semimodular
  lattices}, Graphs Combin. \textbf{16} (2000), no.~1, 115--116.

\bibitem{Rival:1985}
Ivan Rival (ed.), \emph{Graphs and order}, NATO Advanced Science Institutes
  Series C: Mathematical and Physical Sciences, vol. 147, D. Reidel Publishing
  Co., Dordrecht, 1985, The role of graphs in the theory of ordered sets and
  its applications.

\bibitem{Schweig/Woodroofe:2016UNP}
Jay Schweig and Russ Woodroofe, \emph{A broad class of shellable lattices},
  {arXiv:1604.03115}.

\bibitem{Stanley:1999}
Richard~P. Stanley, \emph{Enumerative combinatorics. {V}ol. 2}, Cambridge
  Studies in Advanced Mathematics, vol.~62, Cambridge University Press,
  Cambridge, 1999, With a foreword by Gian-Carlo Rota and appendix 1 by Sergey
  Fomin.

\bibitem{Vsemirnov:2011}
M.~A. Vsemirnov, \emph{More classical groups which are not
  {$(2,3)$}-generated}, Arch. Math. (Basel) \textbf{96} (2011), no.~2,
  123--129.

\bibitem{Wagner:1978}
Ascher Wagner, \emph{The minimal number of involutions generating some finite
  three-dimensional groups}, Boll. Un. Mat. Ital. A (5) \textbf{15} (1978),
  no.~2, 431--439.

\end{thebibliography}

\end{document}